\def\ri{\mathrm{i}}
\def\C{{\mathbb C}}
\def\R{{\mathbb R}}
\def\CP{{\C}P}
\def\gam{\mbox{\raisebox{.45ex}{$\gamma$}}}
\renewcommand{\Re}{\operatorname{Re}}
\renewcommand{\Im}{\operatorname{Im}}
\def\vv{\mathsf{v}}
\newcommand\zbar{\overline{z}}
\def\scrN{\EuScript N}
\def\calQ{\mathcal Q}
\def\calP{\mathcal P}
\def\PP{\mathbb P}
\def\Fhat{\widehat F}
\def\ve{\mathsf{e}}
\def\re{\mathrm{e}}
\def\Impart{\operatorname{\sf Im}}
\def\Repart{\operatorname{\sf Re}}
\def\vec#1{\mathbf{#1}}
\newcommand{\sJ}{\mathsf{J}}
\newcommand\vphi{\boldsymbol{\phi}}
\newcommand\vpsi{\boldsymbol{\psi}}
\def\dx{\;dx}
\newcommand\bp{\mathsf p}
\newcommand{\dib}[1]{\dfrac{\partial}{\partial{#1}}}
\theoremstyle{definition}
\newtheorem{thm}{Theorem}
\newtheorem{prop}[thm]{Proposition}
\newtheorem{lemma}[thm]{Lemma}
\newtheorem*{defns}{Definitions}
\numberwithin{nex}{section}
\newtheorem*{urem}{Remark}
\begin{document}


\title[Geometric Realization of the YO Equations]{A Novel Geometric Realization \\ of the Yajima-Oikawa Equations}
\author{Annalisa Calini \& Thomas Ivey}
\address{Department of Mathematics, College of Charleston, Charleston, SC, USA}
\email{calinia@cofc.edu, iveyt@cofc.edu}
\begin{abstract}
{We show that the Yajima-Oikawa (YO) equations, a model of short wave-long wave interaction, arise from a simple geometric flow on  curves in the 3-dimensional sphere $S^3$ that are transverse to the standard contact structure. For the family of periodic plane wave solutions of the YO equations studied by Wright, we construct the associated transverse curves, derive their closure condition, and exhibit several examples with non-trivial topology.}
\end{abstract}
\keywords{Geometric evolution equations; integrable systems; contact structures; transverse curves; long wave-short wave models}
\maketitle

\section{Introduction}

This work is part of our investigation of curve flows in the 3-sphere $S^3$ that are invariant under the action of the group $SU(2,1)$ of pseudoconformal transformations, which preserves the standard contact structure on the sphere. While the focus of our previous study~\cite{CI2021} was Legendrian curves in the 3-sphere and geometric flows for such curves which are integrable (i.e., inducing integrable evolution equations for their fundamental differential invariants), in this note we discuss an interesting connection between an integrable model of short wave-long wave interaction and a geometric flow for curves that are transverse to the contact structure.\smallskip

The pseudoconformal geometry of $S^3$ is inherited from the geometry of the space $\C^3$ endowed with the indefinite Hermitian form
\begin{equation}
\label{sesqform}
\langle \vec{z}, \vec{w}\rangle := \ri (z_3 \overline{w_1} -z_1  \overline{w_3})+ z_2 \overline{w_2}.
\end{equation}
Given the standard action of $SL(3,\C)$  on $\C^3$, let $SU(2,1)$ denote the subgroup that preserves this form.
Let $\scrN \subset \C^3$ be the {\em null cone}, i.e., the set of nonzero null vectors for \eqref{sesqform}.
The set of complex lines on the null cone is diffeomorphic to $S^3$, the unit sphere in $\C^2$ (see \eqref{S3toz}).
It follows that the linear action of $SU(2,1)$ on $\C^3$ induces an action on $S^3$ known as the group of pseudoconformal transformations.
We will let $\pi$ denote the complex projectivization map from $\C^3$ minus the origin to $\CP^2$, as well as its restriction to the null cone, giving a commutative diagram:
\begin{center}
\begin{picture}(20,30)
\put(0,30){\makebox(0,0){$\scrN$}}
\put(15,30){\makebox(0,0){$\subset$}}
\put(2,14){\scriptsize{$\pi$}}
\put(25,26){$\C^3\backslash \{0\}$}
\put(0,22){\vector(0,-1){15}}
\put(0,0){\makebox(0,0){$S^3$}}
\put(15,-2){\makebox(0,0){$\subset$}}
\put(37,14){\scriptsize{$\pi$}}
\put(36,-1){\makebox(0,0){$\C P^2$}}
\put(35,22){\vector(0,-1){15}}
\end{picture}
\end{center}
The pseudoconformal action preserves the standard contact structure on $S^3$, defined for curves in $S^3$ in terms of
their lifts relative to $\pi$ as follows.

\begin{defns} Let $\gam:I \to S^3$ be a regular parametrized curve on an interval $I \subset \R$.
Then $\gam$ is {\em Legendrian} if it has a lift $\Gamma:I \to \scrN$
satisfying
\begin{equation}\label{contact-cond}
\Im \langle \Gamma_x, \Gamma \rangle = 0.
\end{equation}
By contrast, $\gam$ is a {\em transverse curve} or {\em T-curve} if its lift satisfies
\begin{equation}\label{transv-cond}
\Im \langle \Gamma_x, \Gamma \rangle \not= 0.
\end{equation}
In other words, the tangent vector of a T-curve is everywhere transverse to the contact planes.
(Note that both conditions \eqref{contact-cond},\eqref{transv-cond} are invariant under a change of lift,
i.e., multiplying $\Gamma$ by a nonzero complex-valued function.)
\end{defns}

Let $\gam:I \to S^3$ be a regular curve with lift $\Gamma:I \to \scrN$.  Then $\Gamma$ and its derivative $\Gamma_x$ satisfy
$\langle \Gamma, \Gamma \rangle=0$ and $\Re \langle \Gamma_x, \Gamma \rangle=0.$
If $\gam$ is a transverse curve then $\Im \langle \Gamma_x, \Gamma \rangle \not= 0$, so we can assume the normalization $\langle \ri \Gamma, \Gamma_x \rangle =1$; we can furthermore choose a lift that
also satisfies $\langle \Gamma_x, \Gamma_x \rangle=0$ (see \S2 for more details). With these assumptions, we define a geometric
flow based on the second derivative $\Gamma_{xx}$ as
\begin{equation}
\label{skewnormal}
\Gamma_t=\ri \left( \Gamma_{xx} - \langle \Gamma_{xx}, \Gamma_{x} \rangle \ri \Gamma \right),
\end{equation}
which induces a well-defined flow for the T-curve $\gam = \pi \circ \Gamma$.
Note that the vector field in parentheses on the right-hand side is a modification $\Gamma_{xx}$ that lies in $\{ \Gamma_x\}^\perp$, the orthogonal complement of $\{ \Gamma_x\}$. If we let $\bp_{ \{ \Gamma_x\}^\perp}$ denote the orthogonal projection onto $\{\Gamma_x\}^\perp$,
then writing~\eqref{skewnormal} as $\Gamma_t=\ri \bp_{ \{ \Gamma_x\}^\perp}\left(\Gamma_{xx}\right)$
suggests an analogy with the vortex filament flow $\gam_t=\gam_x \times \gam_{xx}$ (or binormal flow) for an arc length parametrized curve $\gam$ in Euclidean space~\cite{LP1991}, with the skew-symmetric operator $\ri \bp_{ \{ \Gamma_x\}^\perp}$ the analogue of the symplectic operator $T_x \times$ for the binormal flow.

In Sections 2--4 we construct adapted frames for transverse curves---both local frames (akin to the Frenet frames of Euclidean geometry) and non-local `natural' frames---and show that equation~\eqref{skewnormal} can be rewritten in terms of a convenient non-local adapted frame $(\Gamma, \Gamma_x, B)$ as
\[
\Gamma_t= \ri z B,
\]
where $z$ is a complex curvature, part of the set $(z, m)$, $z\in \mathbb{C}, m\in \mathbb{R}$, of geometric invariants of $\Gamma$.
After deriving the evolution equations for the geometric  invariants induced by a general vector field on (lifts of) transverse curves, we show that the evolution induced by~\eqref{skewnormal} on the invariants $(z, m)$ is the following system of nonlinear PDE
\[
\begin{split}
z_t & = \ri (z_{xx} - mz),\\
m_t & = 2(|z|^2)_x,
\end{split}
\]
known as the Yajima-Oikawa (YO) or Long-Wave-Short-Wave equations, a completely integrable model of interaction of long and short waves.

In Section 5, we use the connection between the Lax pair for the YO equations at given $(z, m)$ and the adapted frame of the associated transverse curve to construct examples of geometric realizations of solutions of the YO equations.  We focus on the family of plane wave solutions studied by Wright in~\cite{W2006}, derive closure conditions for the associated curves, and construct explicit formulas. The plane wave solutions, though simple at the YO level, provide a wealth of closed transverse curves with non-trivial topology. We present visualizations of several examples, that illustrate how the knot type and the geometry relate to the parameters in the YO solutions.

In Section 6 we discuss some open questions and directions for future work.

\section{Pseudoconformal Frames and Curvature}

Let $\gam:I \to S^3$ be a T-curve, and $\Gamma$ be a lift satisfying $\Im \langle \Gamma_x, \Gamma \rangle >0$. Since the restriction of the Hermitian form \eqref{sesqform} to the complex span $\mathcal{S}=\text{span}_\mathbb{C}\{\Gamma, \Gamma_x\}$ is non-degenerate, we construct a smooth adapted frame by selecting two linearly independent null vectors--$\Gamma$ itself and a second vector $V\in \mathcal{S}$--and adding a third vector $B$ which is spacelike and spans the complex line orthogonal to $\mathcal{S}$.

As described in \cite{CI2021}, the ordered triple $(\Gamma, B, V)$ of vectors in $\C^3$ can be chosen to satisfy the following inner product relations
\[
\begin{split}
\langle \Gamma, \Gamma \rangle = \langle V, V \rangle = \langle B, \Gamma \rangle = \langle B, V \rangle =0, \\
\langle\Gamma, V \rangle = -\ri, \quad \langle V, \Gamma \rangle = \ri, \quad \langle B, B \rangle = 1.
\end{split}
\]
as well as the condition $\operatorname{det} (\Gamma, B, V)=1$ (meaning that the vectors form the
columns of a unimodular matrix).
We call a triple that satisfies these relations a \emph{unimodular null frame}.  In the rest of this section we describe how a smoothly-varying unimodular null frame, including the lift $\Gamma$ as its first member, can be chosen in an essentially unique way for a regular T-curve, allowing us to identify fundamental invariants.

\subsection*{Local frame.}
In \cite{CI2021} it is shown that, under suitable nondegeneracy assumptions, any parametrized T-curve $\gam$ has a unimodular null frame field $(\Gamma, B, V)$, constructed in terms of algebraic functions of the components of $\gam$ and its derivatives, that satisfies
\begin{equation}\label{Lframe}
\dfrac{d \Fhat}{dx}  = \Fhat \begin{bmatrix}\tfrac13 \ri p & -\ri q & m \\ 0 & -\tfrac23 \ri p & q \\ 1 & 0 & \tfrac13 \ri p \end{bmatrix}.
\end{equation}
where $\Fhat$ denotes the matrix with columns $\Gamma, B, V$, and $m,p,q$ are real-valued {\em fundamental differential invariants} of the parametrized curve.
We refer to this as the {\em local frame}, and it is unique up to multiplication of each column
by the same cube root of unity.  It is the analogue of the (local) Frenet frame for a unit-speed curve $\gam:\R \to \R^3$ in Euclidean space.

\subsection*{Natural frame.} In the Euclidean case, one can also construct the (non-local) {\em relatively parallel} or {\em natural} frame $(T, U_1, U_2)$, where $U_1 = \cos\theta\, N + \sin \theta\, B$, and $U_2 = -\sin \theta\, N + \cos \theta\, B$, with $\theta = - \int \tau\,dx$ and $N$ and  $B$ the unit normal and binormal vectors. This natural frame, which is unique up to a choice of antiderivative $\theta$, satisfies
$$\dfrac{dT}{dx} = k_1 U_1 + k_2 U_2, \quad \dfrac{dU_1}{dx} = -k_1 T, \quad \dfrac{dU_2}{dx} = -k_2 T,$$
so that the normal vectors $U_1, U_2$ rotate only in the direction of the tangent line.  The
functions $k_1=k\cos (\theta)$ and $k_2=k\sin (\theta)$ are {\em natural curvatures} \cite{B1975}.

By analogy with the Euclidean case, given the local frame $\Fhat$ for a T-curve $\gam$ we can use an antiderivative to neutralize the rotation of normal vector $B$ in the normal plane, forming a new unimodular null frame field defined by
\[ F= \Fhat \exp (\theta \sJ), \qquad \text{where } \theta = -\int p\,dx, \quad
\sJ = \begin{pmatrix} \tfrac13 \ri & 0 & 0 \\ 0 & -\tfrac23 \ri & 0 \\ 0 & 0 & \tfrac13 \ri \end{pmatrix}.\]
It follows that $F$ satisfies the {\em nonlocal frame equations}
\begin{equation}\label{NLframe}
\dfrac{d F}{dx}  = F \begin{bmatrix}  0 & -\ri \zbar & m \\ 0 & 0 & z\\ 1 & 0  &0 \end{bmatrix},
\end{equation}
where $z = z= e^{\ri \theta} q$ and $m$ is the same as in \eqref{Lframe}.
One can interpret $z$ as a complex curvature, measuring how
the tangent line $\PP\{\Gamma, V\}$ bends within the complex projective plane.
The real-valued invariant $m=\Impart \langle V, V_x \rangle $  measures the deviation of the projectivization of $V$ from being a Legendrian curve in $S^3$.

\subsection*{Companion $\lambda$-frames}
Any two unimodular null frames at the same point of $S^3$ are linked by a transformation of the following form (see, e.g., \cite{CI2021})
\begin{equation}\label{CoF}
\tilde \Gamma = \nu \Gamma, \quad
\tilde B = \dfrac{\overline{\nu}}{\nu}\left( B +\mu \Gamma \right), \quad
\tilde V = {\overline{\nu}}^{-1} \left[V -\ri \overline{\mu}B- (\lambda + \tfrac12 \ri |\mu|^2 )\Gamma\right],
\end{equation}
where $\nu,\mu$ are complex, with $\nu \ne 0$ and $\lambda$ is real.

Given the local frame for a T-curve, we modify the frame using $\mu=0$, $\nu=1$ and $\lambda$ constant in \eqref{CoF}, to obtain the {\em companion $\lambda$-frame} $\tilde \Gamma=\Gamma, \tilde B=B, \tilde V=V-\lambda \Gamma$.  This modified frame satisfies
\[
\tilde \Gamma_x = \left(\frac{1}{3} \ri p+\lambda\right)\tilde \Gamma +\tilde V,  \quad
\tilde B_x =-\ri q  \tilde \Gamma -\frac{2}{3} \ri p \tilde B, \quad
\tilde V_x =(m - \lambda^2) \tilde \Gamma+(\frac{1}{3} \ri p-\lambda) \tilde V.
\]
If we make a similar modification to a natural frame for a T-curve we obtain the {\em companion $\lambda$-natural frame}, which satisfies
\begin{equation}\label{lambdaNLframe}
\dfrac{d \tilde F}{dx}  = \tilde F \begin{bmatrix}  \lambda & -\ri \zbar & m-\lambda^2 \\ 0 & 0 & z\\ 1 & 0  &-\lambda \end{bmatrix}.
\end{equation}

\begin{urem}
Note that if the projection of the frame vector $\tilde V$ of a companion $\lambda$-frame is a Legendrian
curve in $S^3$ (so that $m-\lambda^2=0$) then the same is true for the companion frame constructed
using $-\lambda$.
\end{urem}

\section{Curve Flows and the Yajima-Oikawa Equations}\label{YOconnection}
If $\gam(x,t)$ is a smooth variation of a T-curve  and $(\Gamma, B, V)$ is a smoothly-varying choice of natural frame, then the vector field $\Gamma_t = f \Gamma + g B + h V$ must satisfy
\begin{equation}\label{nonstretch}
h_x=-2\Repart f \quad \text{and} (\Impart f)_x = \Repart( g \overline{z})
\end{equation}
in order to keep the frame adapted, as shown in Proposition 11 of \cite{CI2021}.  It follows that the nonlocal invariants $m$ and $z=k + \ri \ell$ evolve by
\begin{equation}
\label{YOhierarchy}
\begin{bmatrix} k \\ \ell \\ m\end{bmatrix}_t  = \calP \begin{bmatrix} \Impart g \\ -\Repart g \\ \tfrac12 h \end{bmatrix},
\end{equation}
where
\[
\calP=\begin{pmatrix}
-3\ell D^{-1}\circ \ell & 3\ell D^{-1} \circ k - D^2 + m & 2 D\circ k + k D \\
3 k D^{-1} \circ \ell +D^2 - m & -3k D^{-1}\circ k      & 2 D\circ \ell + \ell D \\
2 k D + D\circ k &  2\ell D + D\circ \ell  & 2(m D + D \circ m) - D^3
\end{pmatrix}
\]
and $D=\partial_x$. The matrix operator $\calP$ is skew-adjoint, and forms a Hamiltonian pair with
\[
\calQ =\small{\begin{pmatrix} 0 & 1 & 0 \\ -1 & 0 & 0 \\ 0 & 0 & D \end{pmatrix}}.
\]

In particular, if $\Gamma$ evolves by
\begin{equation}\label{Bflow}
\Gamma_t = \ri z B,
\end{equation} then the invariants $z$ and $m$ satisfy the \emph{Yajima-Oikawa} (YO) equations
\begin{equation}\label{YO}
\begin{aligned}
z_t & = \ri (z_{xx} - mz),\\
m_t & = 2(|z|^2)_x.
\end{aligned}
\end{equation}
The YO system first appeared in work by Grimshaw in the context of internal gravity waves \cite{G1975}, and was derived by Yajima \& Oikawa \cite{YO1976} and by Djordjevic \& Redekopp \cite{DR1977}  as an integrable model of interaction of a long wave (of amplitude $m$) and a short wave (of complex amplitude $z$).
\subsection*{Integrability} The YO system~\eqref{YO} is the compatibility condition of the following pair of linear systems
\begin{equation}\label{LS}
\vphi_x=U\vphi, \quad \vphi_t=V\vphi,
\end{equation}
where 
\begin{equation}\label{LaxYO}
U=\begin{bmatrix} \lambda & 0 & 1\\ \ri z & 0 & 0 \\ m & \zbar & -\lambda \end{bmatrix}, \qquad
V=\begin{bmatrix} -\tfrac13 \ri \lambda^2  &  -\ri \zbar   & 0 \\ \lambda z- z_x & \tfrac23 \ri \lambda^2  & z \\ |z|^2 & \ri(\lambda \zbar-\zbar_x)& -\tfrac13 \ri \lambda^2 \end{bmatrix},
\end{equation}
with eigenfunction $\phi \in \C^3$, and spectral parameter $\lambda \in \C$.  When $\lambda\in \mathbb{R}$, $U$ and $V$ take value in the Lie algebra $\mathfrak{su}(2,1)$ of the subgroup of $SL(3, \mathbb{C})$ that preserves the Hermitian form \eqref{sesqform}.
Taking the transpose of \eqref{LaxYO} and complex conjugating, we obtain
\begin{equation}\label{LFrenet}
F_x = F \begin{bmatrix} \lambda & -\ri \zbar & m \\ 0 & 0 & z \\ 1 & 0 & -\lambda \end{bmatrix},
\quad
F_t = F \begin{bmatrix} \tfrac13 \ri \lambda^2  &  \lambda \zbar-\zbar_x   & |z|^2 \\ \ri z & -\tfrac23 \ri \lambda^2  & \ri (z_x-\lambda z) \\ 0 & \zbar & \tfrac13 \ri \lambda^2 \end{bmatrix}.
\end{equation}
Comparing the first of these equations to \eqref{lambdaNLframe} shows that system \ref{LFrenet} can be interpreted as the Frenet equations for the companion natural $\lambda$-frame of an T-curve with curvatures $z$ and $\tilde{m}=m+\lambda^2$, and which evolves by the flow
\begin{equation}\label{flowwithlambda}
\Gamma_t=\ri z B +\tfrac{1}{3} \ri \lambda^2 \Gamma.
\end{equation}
This connection between the YO Lax pair and the evolution of (framed) curves allows the construction of interesting examples of transverse curves associated with simple solutions of the YO system, as shown in the rest of the article.

\bigskip

\section{Plane wave solutions}

In~\cite{W2006}, Wright investigates the linear stability of plane wave solutions of the YO equations~\eqref{YOW} and derives explicit solutions of the associated Lax pair in order to construct homoclinic orbits of unstable plane waves.

\newcommand{\WU}{\boldsymbol{\mathrm U}} 
\newcommand{\WV}{\boldsymbol{\mathrm V}} 
\newcommand{\FU}{U} 
\newcommand{\FV}{V} 
\newcommand{\vecr}{\mathsf r}
\subsection{Equivalent Versions of YO}
In~\cite{W2006} the YO system is given as
\begin{equation}\label{YOW}
\begin{aligned}
 A_\tau &= -2\ri(A_{xx} -A B),\\
 B_\tau &= 4(|A|^2)_x
\end{aligned}
\end{equation}
for complex $A(x,\tau)$ and real $B(x,\tau)$, and its Lax pair is given as
\begin{equation}\label{LSW}
\vpsi_x = \WU \vpsi, \qquad \vpsi_\tau = 2\WV \vpsi
\end{equation}
with
\[\WU = \begin{bmatrix} \ri \zeta & A & \ri B \\ 0 & 0 & -\overline{A} \\ -\ri & 0 & -\ri\zeta \end{bmatrix},\quad
\WV = \begin{bmatrix} \tfrac13 \ri\zeta^2 & \zeta A -\ri A_x & \ri |A|^2 \\ 2\overline{A} & -\tfrac23 \ri \zeta^2 & \zeta \overline{A} -\ri \overline{A_x} \\
					0 & -A & \tfrac13 \ri\zeta^2 \end{bmatrix},\]
where $\zeta$ and $\tau$ denote Wright's spectral parameter and time variable respectively.
(Wright's YO equations include an extra parameter which we omit because it can be removed by a simple change of variable.)
The equations \eqref{YOW} are equivalent to \eqref{YO} under the substitutions $A=\overline{z}$, $B=m$ and $\tau=\tfrac12 t$.  Moreover, the linear systems \eqref{LSW} and \eqref{LS} are equivalent under a change of gauge, since with these substitutions, $\WU = M U M^{-1}$ and $\WV = M V M^{-1}$,
where
\[M = \begin{bmatrix} 0 & 0 & 1 \\ 0 & 1 & 0 \\ -\ri & 0 & 0 \end{bmatrix}.\]

\subsection{Wright's Solutions}
In this section,  we will present Wright's solutions, rewritten in terms of our variables.
We will then make use of the eigenfunction formulas in~\cite{W2006}, appropriately adapted to the geometric framework, to construct the associated transverse curves.  We will then identify the
parameter choices that give rise to closed transverse curves.

\begin{prop}[\cite{W2006}]\label{wrightprop}  For real constants $a,b,k$ and $\Lambda$ such that $a>0$, the functions
\[
z(x,t)=a \re^{-\ri N}
\qquad m(x,t)=b, \quad \text{where } N := k x- \Lambda t
\]
give a solution of \eqref{YO} if and only if the {\em dispersion relation} $b+k^2 + \Lambda = 0$ is satisfied.
When these $z,m$ are substituted into \eqref{LS}, a non-trivial solution of \eqref{LS} is given by
\[
\vphi(x,\tau) = \re^{\ri (\mu x + \nu t)} P \vecr,
\]
where
\[ P =\begin{bmatrix} 1 & 0 & 0\\ 0 & \re^{-\ri N} & 0 \\ 0 & 0 & 1 \end{bmatrix},
\]
and $\vecr$ is a nonzero common eigenvector of the matrices
\begin{equation}\label{commons}
\begin{bmatrix} \lambda & 0 & 1 \\ \ri a & \ri k & 0 \\ b & a & -\lambda \end{bmatrix}, \qquad
\begin{bmatrix} -\tfrac13 \ri \lambda^2  &  -\ri a   & 0 \\ a (\lambda +\ri k) & \tfrac23 \ri \lambda^2  -\ri \Lambda & a\\ a^2 & a (k+\ri\lambda)& -\tfrac13 \ri \lambda^2 \end{bmatrix}
\end{equation}
with eigenvalues $\ri \mu$ and $\ri \nu$, respectively.
\end{prop}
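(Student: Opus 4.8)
The plan is to verify both assertions by direct substitution, arranging the second one so that it collapses to a purely algebraic (constant-coefficient) statement after a diagonal gauge transformation.

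\emph{The dispersion relation.} First I would write $z = a\re^{-\ri N}$ with $N = kx - \Lambda t$ and record the derivatives $z_x = -\ri k\,z$, $z_{xx} = -k^2 z$, $z_t = \ri\Lambda\,z$, together with $|z|^2 \equiv a^2$. Substituting into \eqref{YO}: the second equation becomes $0 = m_t = 2(|z|^2)_x = 0$, which holds for any choice of the constants, and the first becomes $\ri\Lambda z = \ri(-k^2 - b)z$. Since $a>0$ the function $z$ vanishes nowhere, so this last identity is equivalent to $\Lambda = -(k^2+b)$, i.e.\ the stated dispersion relation; this yields the ``if and only if''.

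\emph{The eigenfunction.} Here the efficient route is to gauge \eqref{LS} by $P$. Set $\chi := P^{-1}\vphi$ with $P = \mathrm{diag}(1,\re^{-\ri N},1)$. Since $\partial_x \re^{-\ri N} = -\ri k\,\re^{-\ri N}$ and $\partial_t \re^{-\ri N} = \ri\Lambda\,\re^{-\ri N}$, we have $P^{-1}P_x = \mathrm{diag}(0,-\ri k,0)$ and $P^{-1}P_t = \mathrm{diag}(0,\ri\Lambda,0)$, and a short computation gives $\chi_x = M_1\chi$, $\chi_t = M_2\chi$ with $M_1 = P^{-1}UP - P^{-1}P_x$ and $M_2 = P^{-1}VP - P^{-1}P_t$. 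I would then carry out the two conjugations $P^{-1}UP$, $P^{-1}VP$ using $z = a\re^{-\ri N}$, $m = b$: because $P$ is diagonal, every factor $\re^{\pm\ri N}$ cancels, and after subtracting the diagonal correction terms one finds that $M_1$ and $M_2$ are exactly the constant matrices displayed in \eqref{commons}. Consequently, if $\vecr$ is a nonzero common eigenvector of $M_1, M_2$ with eigenvalues $\ri\mu$ and $\ri\nu$, then $\chi = \re^{\ri(\mu x + \nu t)}\vecr$ solves the gauged system, so $\vphi = P\chi = \re^{\ri(\mu x + \nu t)}P\vecr$ solves \eqref{LS}, and it is nontrivial since $\vecr \neq 0$.

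\emph{Main obstacle and a closing remark.} There is no genuine obstacle beyond bookkeeping: the one point requiring care is performing the conjugations $P^{-1}UP$ and $P^{-1}VP$ and tracking which diagonal entry absorbs the $-\ri k$ (resp.\ $\ri\Lambda$) correction, so that it visibly reproduces the $(2,2)$ entries $\ri k$ and $\tfrac23\ri\lambda^2 - \ri\Lambda$ of \eqref{commons}. Finally I would note that a common eigenvector always exists: since \eqref{YO} is the zero-curvature condition for $(U,V)$, the gauged system $\chi_x = M_1\chi$, $\chi_t = M_2\chi$ is compatible, and as $M_1, M_2$ are constant this forces $[M_1,M_2]=0$; commuting complex matrices share an eigenvector.
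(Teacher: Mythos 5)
Your verification is correct: the derivative bookkeeping ($z_x=-\ri k z$, $z_t=\ri\Lambda z$, $|z|^2=a^2$) does reduce \eqref{YO} to the dispersion relation exactly as you say, and the gauge $\chi=P^{-1}\vphi$ with $P^{-1}P_x=\mathrm{diag}(0,-\ri k,0)$, $P^{-1}P_t=\mathrm{diag}(0,\ri\Lambda,0)$ turns \eqref{LS} into a constant-coefficient system whose matrices are precisely those in \eqref{commons}, so $\vphi=\re^{\ri(\mu x+\nu t)}P\vecr$ solves \eqref{LS} whenever $\vecr$ is a common eigenvector. Note that the paper itself offers no proof of this proposition: it is imported from Wright \cite{W2006} (stated there for the system \eqref{YOW} and its Lax pair \eqref{LSW}, and translated to the present variables via the substitutions $A=\overline z$, $B=m$, $\tau=\tfrac12 t$ and the gauge matrix $M$), so your direct substitution-and-gauge argument is a legitimate standalone derivation rather than a rephrasing of an argument in the text. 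Your closing observation that compatibility of the gauged system forces $[M_1,M_2]=0$, hence a common eigenvector exists, goes slightly beyond what the proposition asserts; the paper handles that existence question differently, by recording immediately after the proposition the explicit conditions \eqref{mdeqnmu}--\eqref{mdeqnnu} on $(\mu,\nu)$ under which the two matrices in \eqref{commons} share an eigenvector, which is the form actually needed later for building the fundamental matrix $\Phi=PRE$ from three distinct roots. Your commutativity remark buys a clean abstract existence statement; the paper's route buys the concrete eigenvalue relations used in the closure analysis.
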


\bigskip
It is easy to check that the matrices in \eqref{commons} have a non-trivial common eigenvector if and only if $\mu$ and $\nu$ satisfy
\begin{subequations}
\begin{gather}
(\mu^2+b+\lambda^2)(\mu-k)+a^2=0, \label{mdeqnmu} \\
\nu = \mu^2 -k^2 -\Lambda + \tfrac23 \lambda^2.\label{mdeqnnu}
\end{gather}
\end{subequations}
In order to construct a fundamental matrix solution for \eqref{LS} using solutions of this form, let $\mu=m_1,m_2,m_3$ be three
distinct roots of \eqref{mdeqnmu} and let $n_1, n_2, n_3$ be the corresponding values of $\nu$ given by substituting these
into \eqref{mdeqnnu}.
Then a matrix solution is given by $\Phi = P R E$, where
\[ R = \begin{pmatrix} -1& -1 & -1 \\
\dfrac{a}{k-m_1} & \dfrac{a}{k-m_2} & \dfrac{a}{k-m_2}\\[8pt]
\lambda-\ri m_1 & \lambda-\ri m_2 & \lambda-\ri m_3\end{pmatrix},
\quad E = \begin{pmatrix} \re^{\ri (m_1 x + n_1 t)}  & 0 & 0 \\
 0 & \re^{\ri(m_2 x + n_2 t)} & 0 \\
 0 & 0 & \re^{\ri(m_3 x + n_3 t)} \end{pmatrix}.
\]

Our discussion in \S\ref{YOconnection} implies that if $\Phi$ is a fundamental matrix solution of the YO Lax pair for a real value of $\lambda$, and taking value in the group $SU(2,1)$, then $F = \Phi^\dagger$ is a $\lambda$-natural frame matrix for a transverse curve evolving by \eqref{flowwithlambda}.
Since $\lambda  \in \R$ implies that the coefficient matrices in \eqref{LS} take value in $\mathfrak{su}(2,1)$, we can
ensure that our fundamental matrix takes value in $SU(2,1)$ by modifying it to be equal to the identity matrix when $x=t=0$:
\begin{equation}\label{normalizedPhi}
\Phi = P R E R^{-1} P_0^{-1},\qquad \text{where } P_0 = P\big\vert_{x=t=0}.
\end{equation}
Using this matrix to construct the natural frame, and taking the projectivization of the first frame vector $\Gamma$ to
obtain a transverse curve $\gam$, we now consider the question of when the resulting curve is smoothly closed.

\begin{prop} Suppose the fundamental matrix $\Phi = P R E$ described above corresponds to a $\lambda$-natural frame for a T-curve $\gam$.  Then $\gam$ is closed of length $L$ if and only if there is a cube root $\omega$ of unity such that
\begin{equation}\label{closurem}
\re^{\ri m_j L} = \overline{\omega} \re^{\ri k L/3}.
\end{equation}
\end{prop}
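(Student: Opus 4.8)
The plan is to reduce the closedness of $\gam$ to a periodicity statement about the lift $\Gamma$, to read that lift off explicitly from the factored form $\Phi = PRE$, and then to feed the dispersion relation \eqref{mdeqnmu} into the resulting scalar condition via Vieta's formulas.

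First I would unpack the hypothesis. Since $F=\Phi^\dagger$ is the natural frame matrix, its first column $\Gamma$---whose projectivization is $\gam$---is the complex conjugate of the first row of $\Phi=PRE$. The first row of $P$ is $(1,0,0)$ and the first row of $R$ is $(-1,-1,-1)$, so the first row of $PRE$ is $\bigl(-\re^{\ri(m_1x+n_1t)},\,-\re^{\ri(m_2x+n_2t)},\,-\re^{\ri(m_3x+n_3t)}\bigr)$, and hence
\[
\Gamma(x,t)=-\bigl(\re^{-\ri(\overline{m_j}x+\overline{n_j}t)}\bigr)_{j=1,2,3}^{\top},\qquad \Gamma(x+L,t)=\operatorname{diag}\bigl(\re^{-\ri\overline{m_j}L}\bigr)\,\Gamma(x,t).
\]
Now $\gam(\cdot,t)$ is closed of length $L$ exactly when $\Gamma(x+L,t)$ is a nowhere-zero scalar multiple $c(x,t)\,\Gamma(x,t)$ of $\Gamma(x,t)$ for all $x$; comparing the two displayed expressions component by component, and using that each entry of $\Gamma$ is nowhere zero, this forces $\re^{-\ri\overline{m_j}L}=c(x,t)$ for every $j$. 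Thus $c$ is constant (in fact independent of both $x$ and $t$) and $\re^{\ri m_1L}=\re^{\ri m_2L}=\re^{\ri m_3L}$ after conjugating, as $L\in\R$. Conversely, if the three numbers coincide then the displayed identity gives $\gam(x+L,t)=\pi\bigl(c\,\Gamma(x,t)\bigr)=\gam(x,t)$, and since $\gam(\cdot,t)$ is a smooth curve this exhibits it as a smooth closed curve on $\R/L\mathbb{Z}$. So closedness of $\gam$ is equivalent to the equality of $\re^{\ri m_1L},\re^{\ri m_2L},\re^{\ri m_3L}$. (Incidentally, this already forces the $m_j$ to be real: equal moduli give equal $\Im m_j$, and $\sum_j m_j\in\R$.)

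It remains to recognize that common value. Expanding \eqref{mdeqnmu} as the monic cubic $\mu^3-k\mu^2+(b+\lambda^2)\mu+a^2-k(b+\lambda^2)=0$, Vieta's formula gives $m_1+m_2+m_3=k$; so if $\rho$ denotes the common value of the $\re^{\ri m_jL}$ then $\rho^3=\re^{\ri(m_1+m_2+m_3)L}=\re^{\ri kL}=\bigl(\re^{\ri kL/3}\bigr)^3$, whence $\rho\,\re^{-\ri kL/3}$ is a cube root of unity; writing its conjugate as $\omega$ yields $\re^{\ri m_jL}=\overline{\omega}\,\re^{\ri kL/3}$ for $j=1,2,3$, which is \eqref{closurem}, and the converse runs backward. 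The computations are all routine; the only steps requiring care are the transpose-and-conjugate bookkeeping in passing from the Lax eigenfunction matrix $\Phi$ to the natural frame $F=\Phi^\dagger$ (so that one extracts the correct vector $\Gamma$), and recognizing that it is precisely the dispersion relation \eqref{mdeqnmu}---through $m_1+m_2+m_3=k$---that is responsible for the factor $\re^{\ri kL/3}$ and the cube-root-of-unity ambiguity in \eqref{closurem}. I do not foresee any real obstacle beyond this.
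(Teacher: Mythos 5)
Your argument is correct, but it takes a genuinely different route from the paper's proof. The paper's proof is frame-theoretic: it invokes the fact that the \emph{local} frame $\Fhat = F\exp(-\theta\sJ)$ is determined by $\gam$ up to a cube root of unity, so closure of $\gam$ becomes the frame monodromy condition $\Fhat(x+L)=\omega\Fhat(x)$; the factor $\re^{\ri kL/3}$ in \eqref{closurem} then arises from the twist $\exp(-\theta\sJ)$ with $\theta=-N$, and $\omega$ from the frame ambiguity. You instead reduce closure to projective periodicity of the explicit lift $\Gamma=\Phi^\dagger\ve_1$ (the conjugated first row of $PRE$), compare entries of the diagonal monodromy of $E$ to get $\re^{\ri m_1L}=\re^{\ri m_2L}=\re^{\ri m_3L}$, and produce both the cube root of unity and the $\re^{\ri kL/3}$ factor from Vieta's relation $m_1+m_2+m_3=k$ for the cubic \eqref{mdeqnmu} (equivalently the first relation in \eqref{parmrels}). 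Your computation of $\Gamma$ is right ($F=\Phi^\dagger$ does satisfy the spatial equation in \eqref{LFrenet}, and its first column is the conjugate of the first row of $\Phi$), the nonvanishing of the entries justifies the entrywise comparison, and working with $\Phi=PRE$ rather than the normalized matrix \eqref{normalizedPhi} is harmless since a constant right factor of $\Phi$ acts on $\gam$ by a fixed projective transformation and so does not affect closure. What each approach buys: yours is more elementary and self-contained---it needs neither the uniqueness-up-to-cube-root property of the local frame nor the commutation of $\sJ$ with $P$ and $M$, and it makes explicit that only periodicity of the position (not of the frame) need be imposed, with the reality of the $m_j$ coming out as a byproduct; the paper's route is more structural, showing at the same time that the whole adapted frame closes up to the same $\omega$ (so the invariants and companion curves close as well), and it identifies $\omega$ intrinsically as the local-frame monodromy, which is the form used in Lemma~\ref{closurelemma}.
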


\begin{proof}  Let $F$ be a natural $\lambda$-frame along the curve, satisfying the spatial part of \eqref{LFrenet}, and let $\Fhat$ be the local frame related to $F$ by $\Fhat = F \exp(-\theta \sJ)$.  Because the local frame is uniquely determined by derivatives of $\gam$, up to multiplying by a cube root of unity, then $\gam$ is closed of length $L$ if and only if
\begin{equation}\label{localclosurerel}
\Fhat(x+L) = \omega \Fhat(x).
\end{equation}
For the solutions of Prop. \ref{wrightprop}, $\theta=\arg z = -N$; rewriting \eqref{localclosurerel} in terms of $F=\Phi^\dagger$, then in terms of the factors
of $\Phi$ given by \eqref{normalizedPhi}, and simplifying (using the fact that $\sJ$ commutes with $M$ and $P$) gives the condition \eqref{closurem}.
\end{proof}

Without loss of generality, we will take $L=2\pi$ from now on, and assume the roots of \eqref{mdeqnmu}
are numbered in ascending order, i.e., $m_1 < m_2 < m_3$.  Note that, for a given value of $\lambda$, these roots determine the coefficients in the polynomial via:
\begin{equation}\label{parmrels}
\begin{aligned}
k &= m_1+m_2+m_3, \\
a^2 &= (k-m_1)(k-m_2)(k-m_3),\\
b &= m_1 m_2 + m_1 m_3 + m_2 m_3-\lambda^2.
\end{aligned}
\end{equation}


\begin{lemma}\label{closurelemma}
The above closure conditions \eqref{closurem} are satisfied if and only if there are positive integers $p,q$ such that
$k$ satisfies either
\begin{subequations}\label{kcases}
\begin{gather}
-\tfrac12 (2p+q) < k < \tfrac12(p-q)\label{firstkcase}\\
\intertext{or}
 k > \tfrac12(p+2q). \label{secondkcase}
\end{gather}
\end{subequations}
In either case, the roots are given by
\begin{equation}\label{mvalues}
m_1 = \tfrac13(-2p-q+k), \quad m_2 = \tfrac13(p-q+k), \quad m_3 = \tfrac13 (p+2q+k),
\end{equation}
and $\omega = \re^{2\pi \ri \epsilon/3}$ where $\epsilon = 0,1,2$ is such that $3m_j-k \equiv \epsilon$ modulo 3.
\end{lemma}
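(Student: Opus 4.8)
The plan is to read the closure conditions \eqref{closurem} (with $L=2\pi$, as we may assume) as three separate equations, one for each root $m_j$, and to extract arithmetic constraints by comparing them. First I would rewrite \eqref{closurem} as $\re^{2\pi\ri(m_j-k/3)}=\overline{\omega}$, whose right-hand side is independent of $j$; dividing the equation for $m_2$ by that for $m_1$ then gives $\re^{2\pi\ri(m_2-m_1)}=1$, so $m_2-m_1\in\mathbb{Z}$, and likewise $m_3-m_2\in\mathbb{Z}$. Because the roots are numbered so that $m_1<m_2<m_3$, the integers $p:=m_2-m_1$ and $q:=m_3-m_2$ are positive, and combining this with $k=m_1+m_2+m_3=3m_1+2p+q$ solves for $m_1$, hence for $m_2=m_1+p$ and $m_3=m_1+p+q$; this is exactly \eqref{mvalues}.

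Next I would produce the inequalities \eqref{kcases}. By the relations \eqref{parmrels} --- equivalently, Vieta's formulas for the cubic \eqref{mdeqnmu} --- one has $a^2=(k-m_1)(k-m_2)(k-m_3)$, and substituting \eqref{mvalues} turns this into
\[
a^2=\tfrac1{27}\,(2k+2p+q)(2k-p+q)(2k-p-2q).
\]
Viewed as a cubic in $k$ with positive leading coefficient, the right-hand side vanishes at $k=-\tfrac12(2p+q)$, $k=\tfrac12(p-q)$, and $k=\tfrac12(p+2q)$, and these three values are strictly increasing in that order precisely because $p,q>0$. Hence the standing hypothesis $a>0$ of Proposition~\ref{wrightprop} (i.e.\ $a^2>0$) holds exactly when $k$ lies in the middle open interval $\bigl(-\tfrac12(2p+q),\,\tfrac12(p-q)\bigr)$ or in the ray $\bigl(\tfrac12(p+2q),\,\infty\bigr)$, which are \eqref{firstkcase} and \eqref{secondkcase}. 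This settles the forward implication, and running the argument in reverse sets up the converse: from any positive integers $p,q$ and any $k$ in one of the two ranges, the numbers \eqref{mvalues} together with $a,b$ defined by \eqref{parmrels} (with any real $\lambda$ and $\Lambda=-b-k^2$) give a valid Wright solution --- $a^2>0$, and the $m_j$ are three distinct roots of \eqref{mdeqnmu} --- whose closure I verify next.

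It then remains only to check \eqref{closurem} for these parameters and to identify $\omega$. From \eqref{mvalues} one computes $3m_1-k=-2p-q$, $3m_2-k=p-q$, $3m_3-k=p+2q$, and since $-2\equiv1$ and $2\equiv-1\pmod{3}$ all three of these integers are congruent to $p-q$ modulo $3$. Writing $\epsilon\in\{0,1,2\}$ for this common residue, we get $\re^{2\pi\ri(3m_j-k)/3}=\re^{2\pi\ri\epsilon/3}$ for every $j$, hence $\re^{\ri m_j L}=\re^{2\pi\ri\epsilon/3}\,\re^{\ri k L/3}$ with $L=2\pi$, which is \eqref{closurem} with $\overline{\omega}=\re^{2\pi\ri\epsilon/3}$ and so determines $\omega$ as in the statement.

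I expect the only genuinely delicate points to be bookkeeping: ensuring that the positive integers $p,q$ extracted in the forward direction are literally the gaps appearing in \eqref{mvalues}, so that the two implications are mutually inverse, and reconciling the conjugation/sign convention when passing between $\omega$ in \eqref{closurem} and the residue $\epsilon$ of $3m_j-k$ modulo $3$. The rewriting of $a^2$ as a product of three linear factors of $k$, and the sign analysis that follows, is routine --- but since it is precisely what produces the explicit bounds \eqref{kcases}, the arithmetic there warrants a careful check.
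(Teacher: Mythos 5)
Your proof is correct and follows essentially the same route as the paper: you extract the integer gaps $p=m_2-m_1$, $q=m_3-m_2$ from the closure condition (the paper does this via the integers $l_j$ in $m_j=l_j+\tfrac13\epsilon+\tfrac13 k$), and you obtain \eqref{kcases} from the positivity of $a^2=(k-m_1)(k-m_2)(k-m_3)$, which is just the explicit cubic-in-$k$ form of the paper's condition $m_1<k<m_2<m_3$ or $m_1<m_2<m_3<k$. The small sign ambiguity you flag in identifying $\omega$ versus $\overline{\omega}$ is present in the paper's own conventions and does not affect the argument.
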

\begin{proof}
We can rewrite the closure condition~\eqref{closurem} as
\begin{equation}\label{mtoell}
m_j=l_j + \tfrac{1}{3}\epsilon + \tfrac13 k,
\end{equation}
for some integers $l_1<l_2<l_3$. The second relation in~\eqref{parmrels} is satisfied for $a>0$ if and only if
\begin{equation}
\label{mcases}
m_1<k<m_2<m_3 \qquad \text{or} \qquad m_1<m_2<m_3<k.
\end{equation}
When written in terms of the positive integers $p=l_2-l_1$ and $q=l_3-l_2$, the two conditions in~\eqref{mcases} become those in~ \eqref{kcases}.

Conversely, suppose a pair positive integers $p, q$ satisfy either of the equations in \eqref{kcases} for some real number $k$.  Let $\epsilon=0,1,2$ be chosen
so that $p-q \equiv \epsilon$ modulo 3, and let
\[l_1=\tfrac13 (-2p-q-\epsilon), \qquad l_2=\tfrac13 (p-q-\epsilon), \quad l_3=\tfrac13 (p+2q)-\frac13\epsilon.\]
Then with $m_j$ given by \eqref{mtoell}, condition~\eqref{mcases} is satisfied.
\end{proof}

\subsection{Visualizing Examples}
In this section we will exhibit examples of closed transverse curves in $S^3$, generated using the fundamental matrix $\Phi$ corresponding to
Wright's solutions, with closure conditions imposed using Lemma \ref{closurelemma}.  In particular, we will observe knotted transverse curves which, because their differential invariants are the same at each time, move by rigid motion under the flow \eqref{flowwithlambda}.

In more detail, given two positive integers $p,q$ one may select any value of $\lambda$ and a real value of $k$ satisfying one of the
inequalities in \eqref{kcases}.  (The other parameters involved in the solution are determined by equations \eqref{parmrels} and \eqref{mvalues}.)  This yields two distinct 2-parameter families of closed curves for each pair $(p,q)$.  (Exactly how we construct these curves is explained below.)  We will assume that $p,q$ are relatively prime; experiments indicate that the knot types
are the same when $p,q$ are multiplied by a common integer factor.

In the case \eqref{firstkcase} we observe that the curve in $\R^3$ is a right-handed $(p,q+p)$-torus knot.  Recall that the type of a $(m,n)$-torus knot
depends only on the {\em unordered pair} $\{m,n\}$.  However, for our examples we find that
when $k$ is close to its lower limit, the knot takes a shape with $p$ strands that wind along the torus the long way, while when $k$ is close to its upper limit the knot has $p+q$ strands winding the long way.
In general, the knot shape is more compact and symmetric when $\lambda=0$; Figure~\ref{RHanders}
shows two shapes for the same $p,q,k$ but different $\lambda$ values.  (Note that \eqref{parmrels} shows that these curves have
the same differential invariant $z$ but different constant values $m=b$.)

We showed in \cite{CI2021} that transverse curves for which $z=0$ identically are $SU(2,1)$-congruent to curves which run along the circular fibers of the Hopf fibration.  Thus, when $k$ approaches one of the roots $m_j$, $a=|z|$ will approach zero and the closed curve will approach a multiply-covered circle congruent to a Hopf fiber.
In Figure~\ref{hom25}, we show a family of right-handed $(2,5)$-torus knots, corresponding to a range of
$k$-values, where at both ends of the family the curve approaches a multiply-covered circle.

In the case \eqref{secondkcase} we observe that the curve in $\R^3$ is a left-handed $(p,q)$-torus knot.  (When $p=1$ or $q=1$ the curve is unknotted, as shown in Figure~\ref{LH11withV}.)  When $k$ is close to its lower limit the curve has $q$ strands winding around the torus the long way, and its shape approaches a circle covered $q$ times.
For large values of $k$, the curve approaches a flattened teardrop shape,
with the knot crossings compressed into a small region near where $x=\pi$.
Both these limiting behaviors are illustrated in Figure~\ref{LH23}.


\begin{figure}
\includegraphics[width=4.5cm]{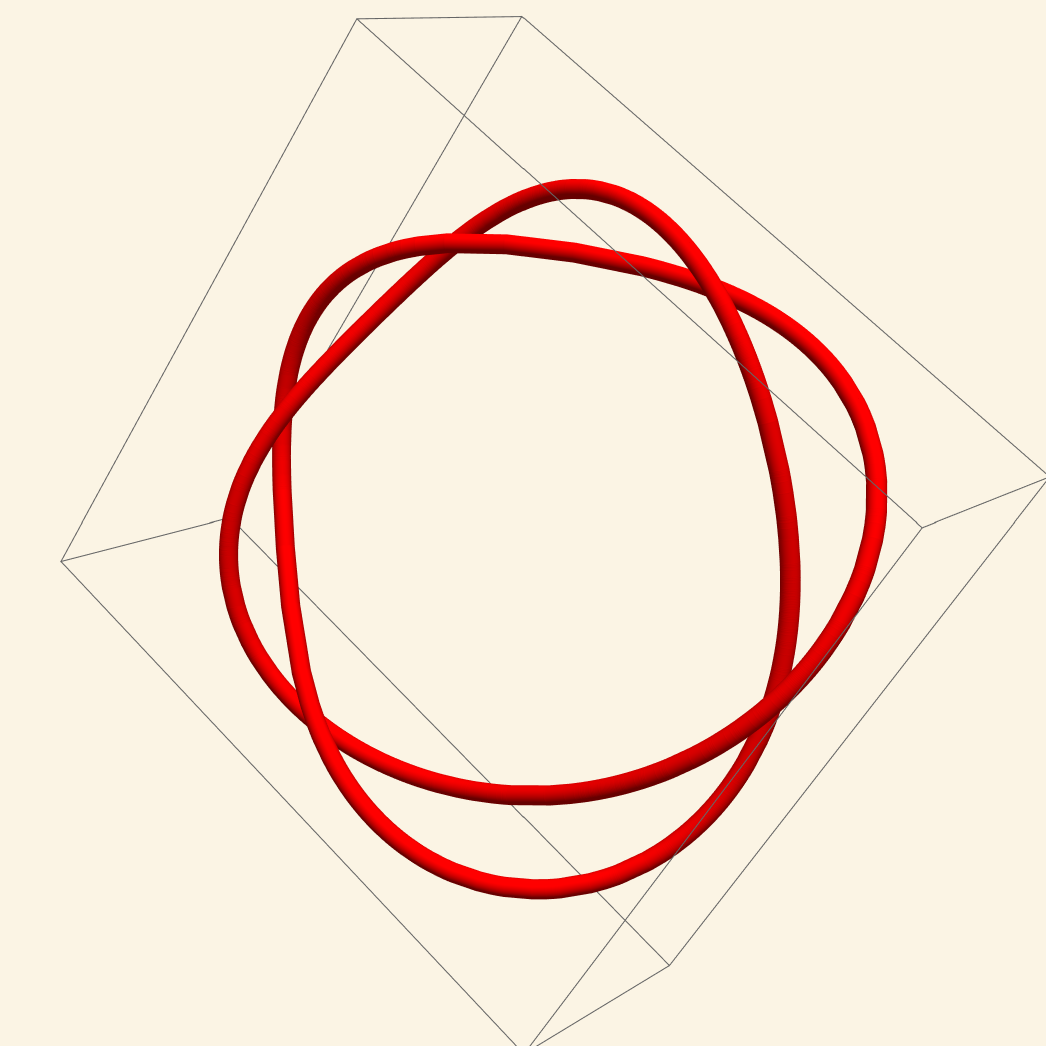}
\includegraphics[width=4.5cm]{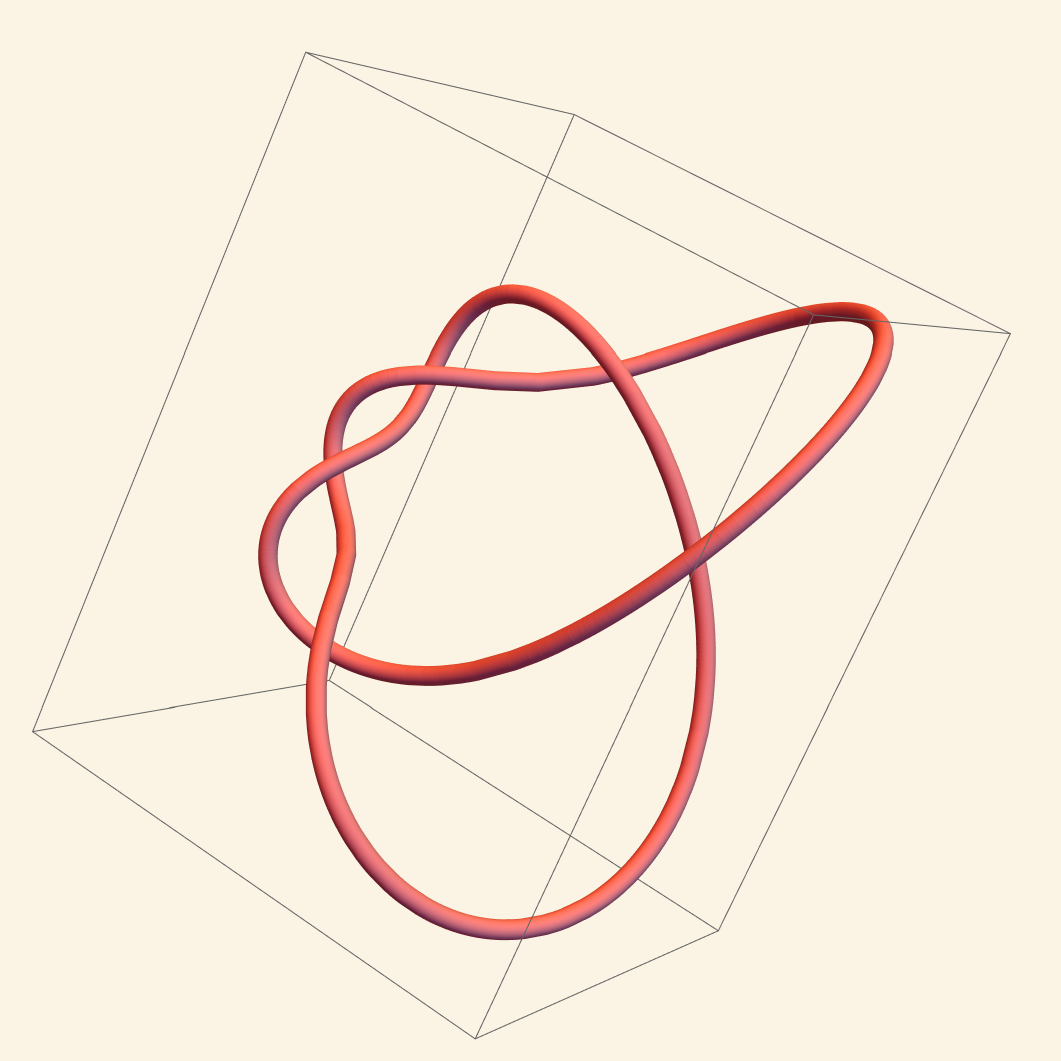}
\includegraphics[width=4.5cm]{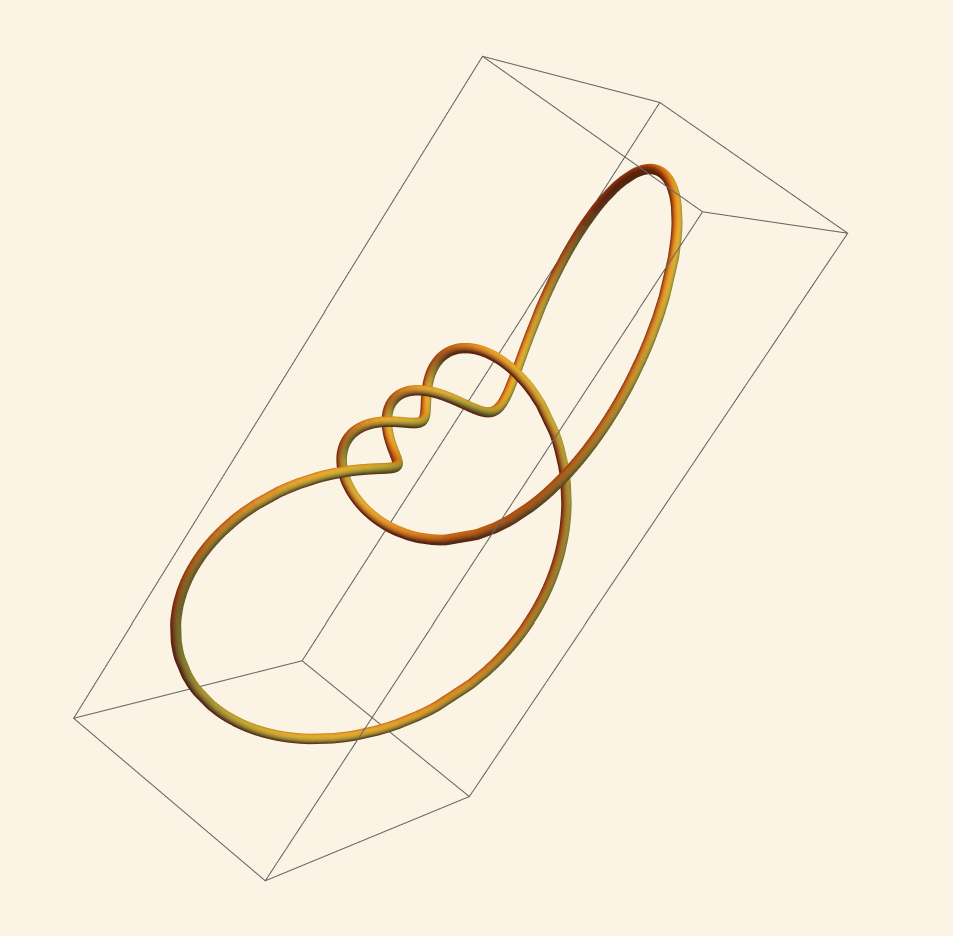}\\
\includegraphics[width=4.5cm]{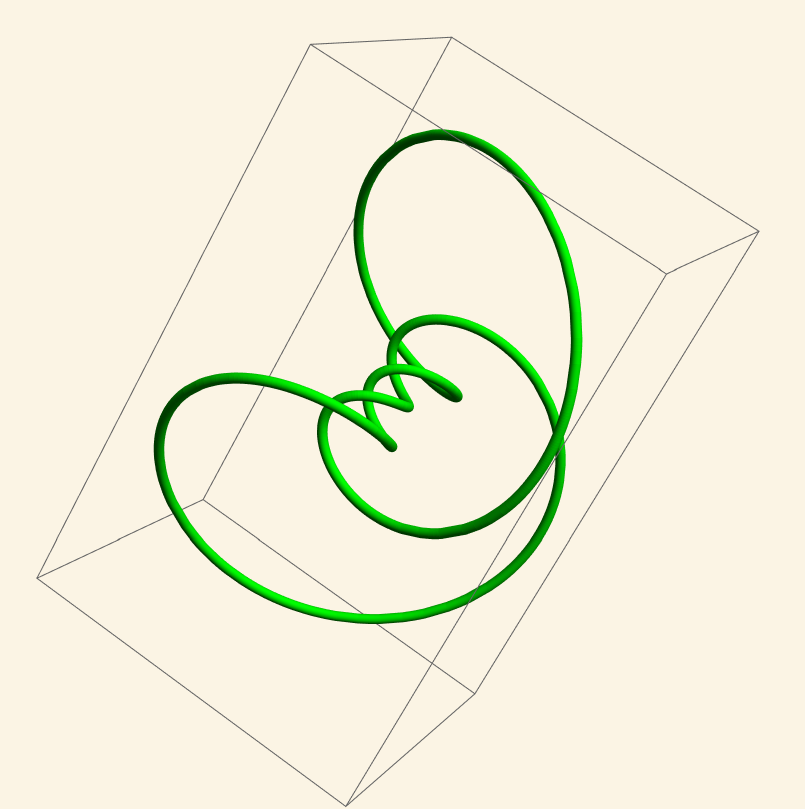}
\includegraphics[width=4.5cm]{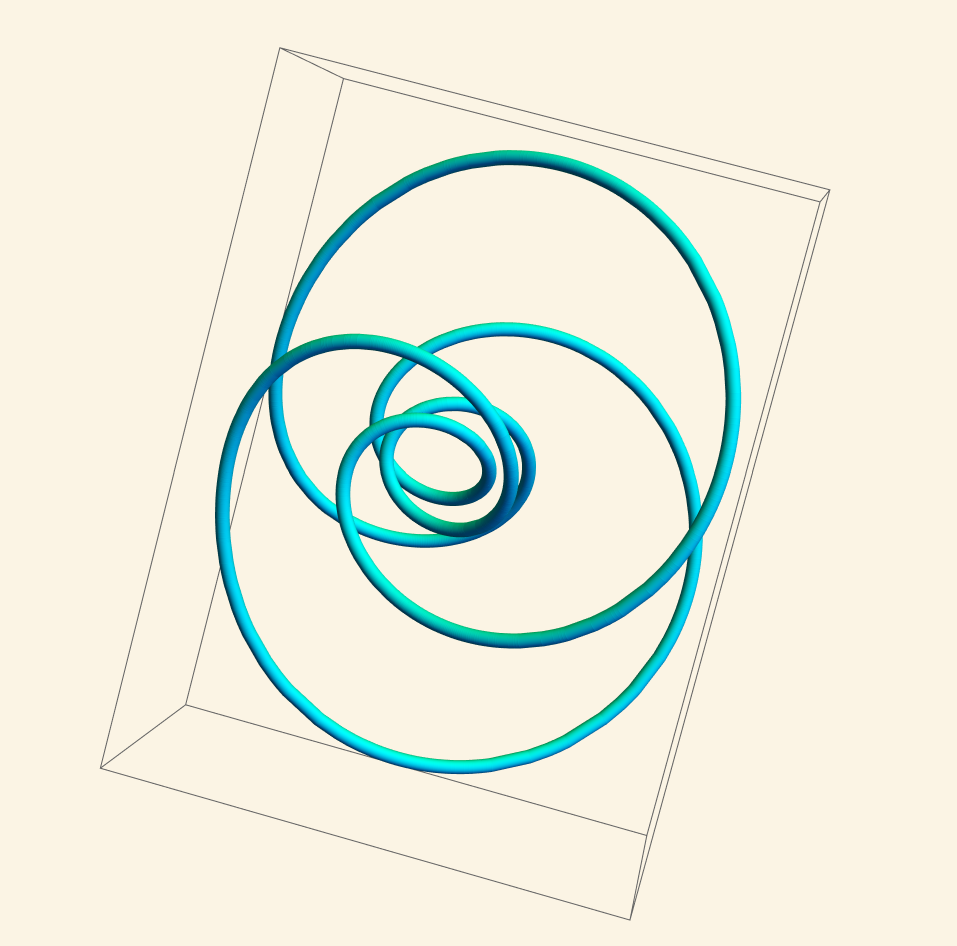}
\includegraphics[width=4.5cm]{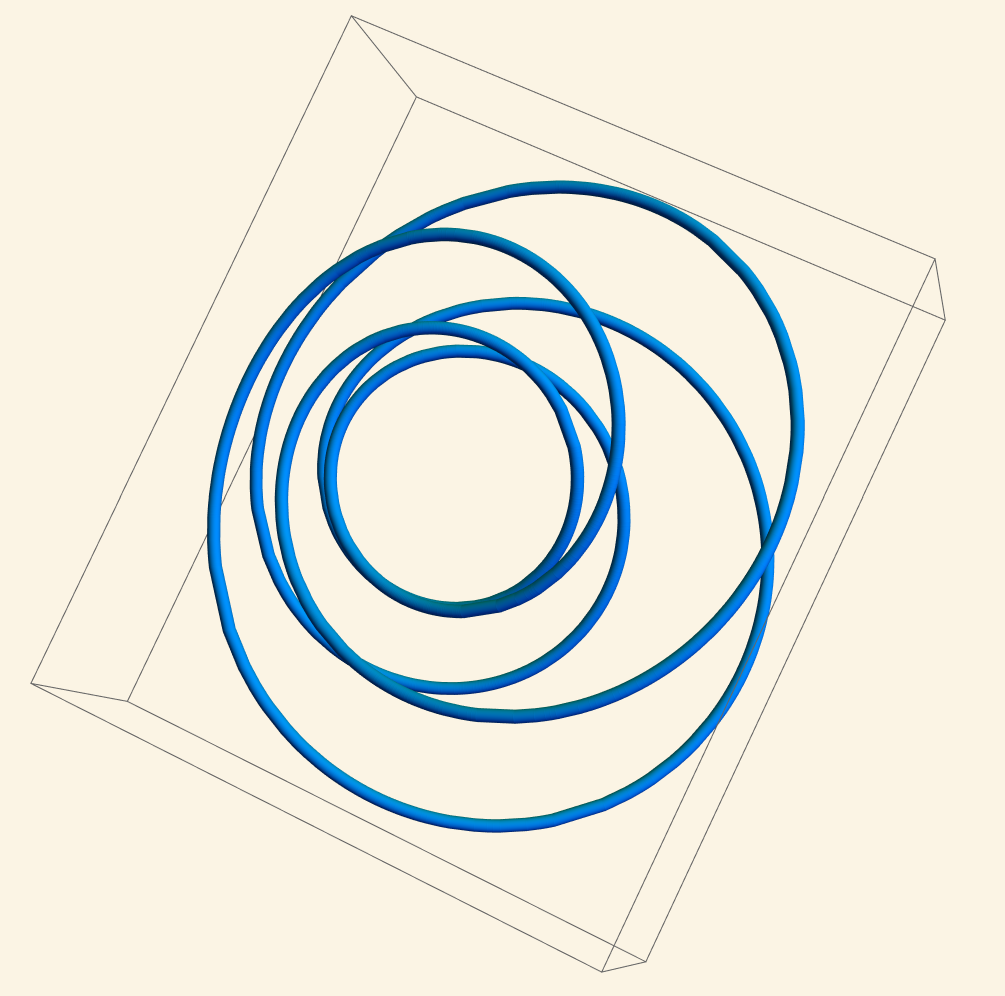}
\caption{A family of $(2,5)$ torus knots obtained using $p=3, q=2$, $\lambda=0$ and the values
$k=-3.85, -3.25, -2.5$ in the top row and $k=-1.75, -0.7, 0.2$ in the bottom row.}
\label{hom25}
\end{figure}


\begin{figure}
\includegraphics[width=5cm]{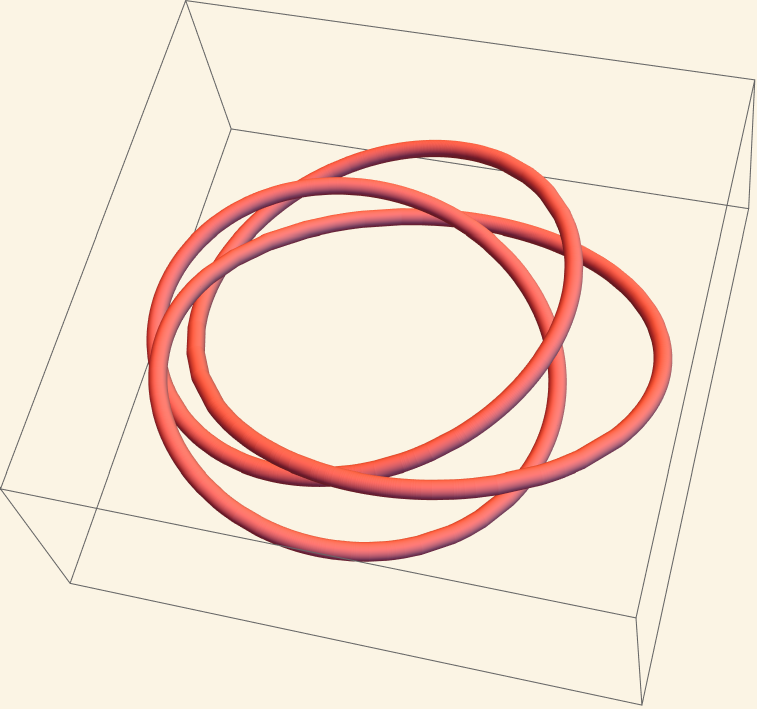}\qquad
\includegraphics[width=5cm]{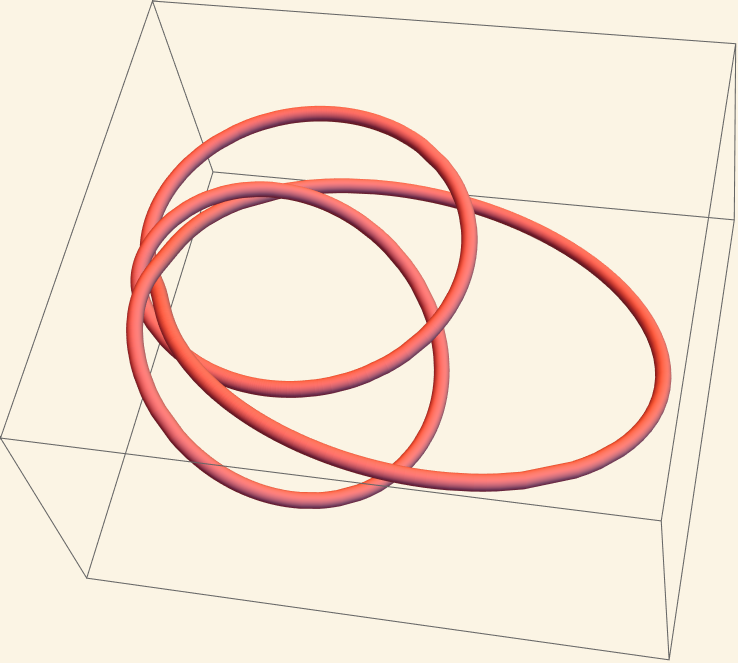}
\caption{Right-handed $(3,4)$ torus knots obtained using $p=1$, $q=3$ and $k=-2.2$; on the left
$\lambda=0$, while on the right $\lambda=3.1$.}
\label{RHanders}
\end{figure}

\begin{figure}
\includegraphics[width=5.3cm]{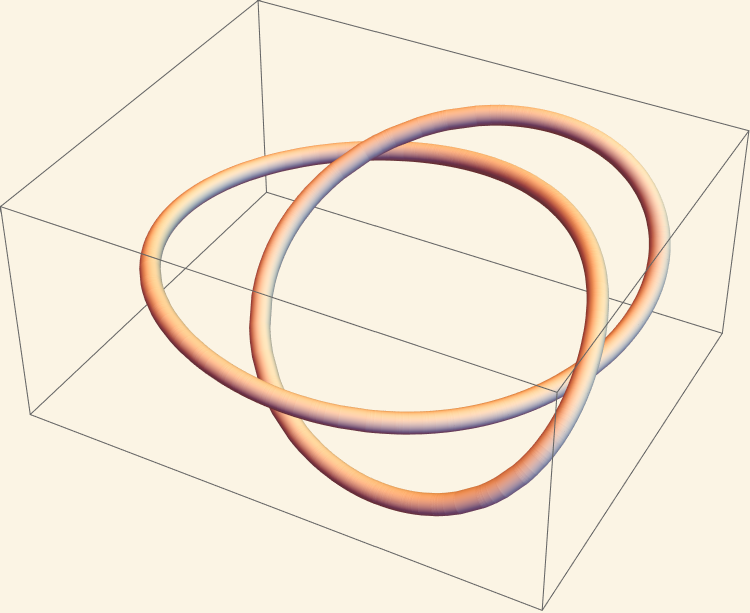}\qquad
\includegraphics[width=6cm]{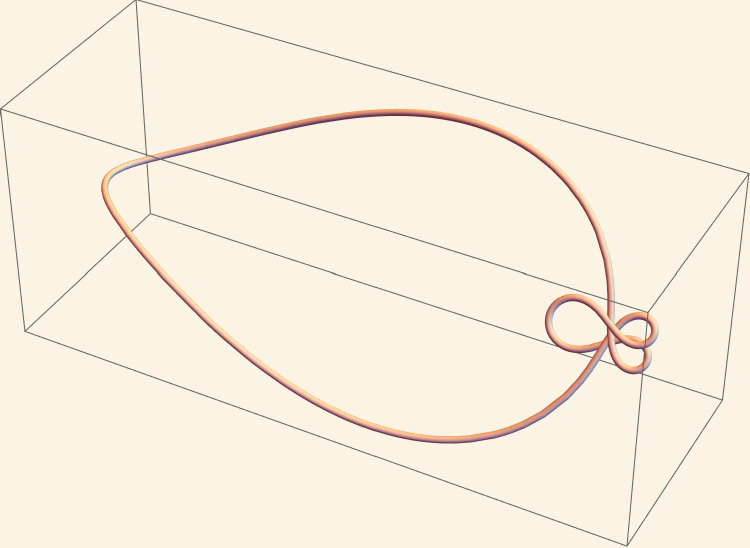}
\caption{Left-handed $(2,3)$ torus knots (i.e., trefoils) obtained using $p=1$, $q=2$ and $\lambda=0$; on the left
$k=4.6$, while on the right $k=31$.}
\label{LH23}
\end{figure}

\subsection{Constructing Transverse Curves}
Once we have a fundamental matrix solution $\Phi$ for the linear system \eqref{LS}, the first component of the $\lambda$-natural frame is then given by
$$\Gamma = F \ve_1  = \Phi^\dagger \ve_1,$$
taking value in the null cone $\scrN$.
We produce curves in $S^3$ using a projection $\pi: \scrN \to S^3$ given in terms of the components of $\Gamma$ by
\begin{equation}\label{S3toz}
z_1 = \dfrac{\Gamma_3 - \ri \Gamma_1}{\Gamma_3 + \ri \Gamma_1}, \qquad z_2 = \dfrac{\sqrt{2}, \Gamma_2}{\Gamma_3 + \ri \Gamma_1},
\end{equation}
where $(z_1, z_2)$ lie on the unit sphere in $\C^2$ equipped with its standard Hermitian inner product.  For purposes of visualization, we in turn apply stereographic projection into $\R^3$ (using the point $z_1=0$, $z_2=\ri$ as pole) given by
\[ \sigma: (z_1, z_2) \mapsto \left( \dfrac{\Re z_1}{1-\Im z_2}, \dfrac{\Im z_1}{1-\Im z_2}, \dfrac{\Re z_2}{1-\Im z_2}\right).\]

\begin{urem} The action of $SU(2,1)$ on the null cone preserves the 1-form $\alpha_N = (dg,g)$, which is the pullback
under $\pi$ of the standard contact form on $S^3$, given by $\alpha_S = \tfrac12 \Im ( \zbar_1 dz_1 + \zbar_2 dz_2)$.
The contact planes in $S^3$ annihilated by this 1-form are orthogonal to the Hopf fibers.  Since $S^3$ is parallelizable, we can choose an globally defined orthogonal frame $(\vv_0, \vv_1, \vv_2)$ such that $\vv_1, \vv_2$ are tangent to the contact planes.  For purposes
of visualizing the contact distribution, we will use the following vectors in $\R^3$ which are tangent to the image of this distribution
under stereographic projection:
\begin{align*}
\sigma_* \vv_1 &= -(z+xy)\dib{x} + \tfrac12(x^2-y^2+z^2-1)\dib{y} +(x-yz) \dib{z},\\
\sigma_* \vv_2 &= \tfrac12(x^2-y^2-z^2+1) \dib{x} +(xy-z)\dib{y}+(xz+y)\dib{z}
\end{align*}
Figure~\ref{LH11withV} shows how the curve $\gam$ is transverse to the planes spanned by these vector fields.

Recall from \eqref{NLframe} that when $m=0$ the frame vector $V$ projects to a Legendrian curve in $S^3$.  Figure~\ref{LH11withV} also
shows this companion curve which in this example is linked with $\gam$ and tangent to the contact planes.
\end{urem}

\begin{figure}[h!]
\includegraphics[trim=50 30 50 50,clip=true,width=9cm]{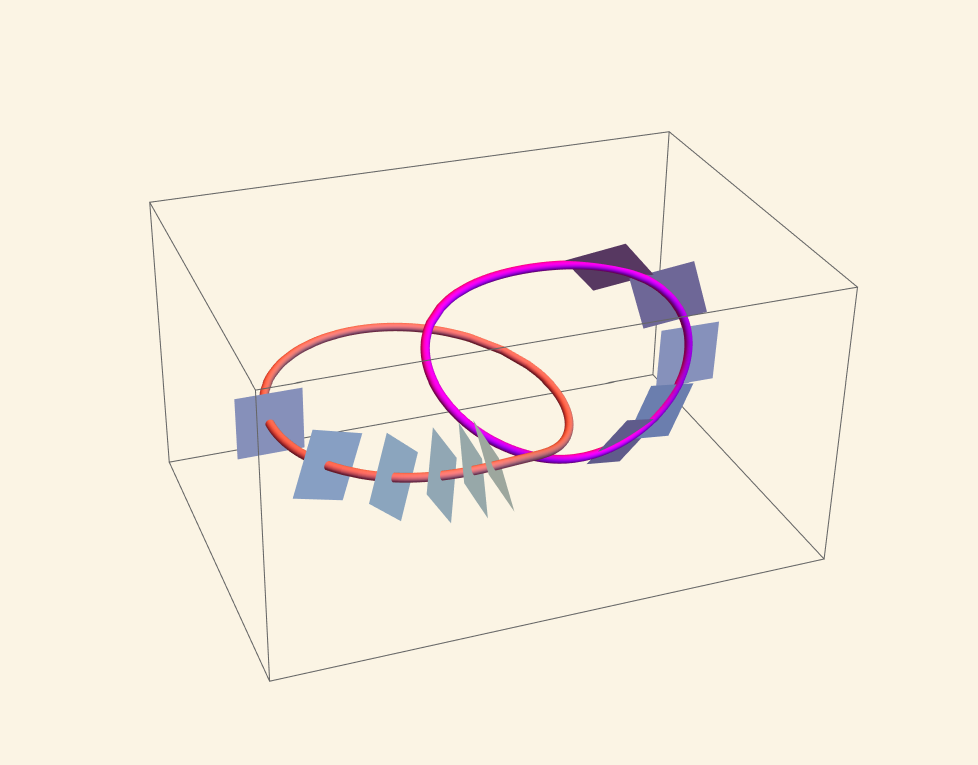}
\caption{At left, in orange, is an unknotted curve $\gam$ generated using parameter values $p=q=1$, $k=2$ and $\lambda=1/\sqrt{3}$.  Substituting these values into \eqref{mvalues} and \eqref{parmrels} shows that $m=b=0$, hence the curve traced by the projectivization
of frame vector $V$ (shown at right in magenta) is a Legendrian curve.  Along both curves we have drawn some planes of the contact distribution.}
\label{LH11withV}
\end{figure}

\section{Discussion}

We have shown how the YO equations arise, somewhat unexpectedly, from a simple geometric flow for curves in $S^3$ that are transverse to the standard contact structure. The recent renewed interest in the YO equations and related systems (see, e.g., ~\cite{C-HDLS2021,C-HDLlS2022, LG2022}), the analogies between the  geometric flow considered in this work and the vortex filament flow, and the relatively simple reconstruction of the transverse curve in terms of solutions of the YO Lax pair, makes this a good case for exploring questions such as recursion schemes and the geometric and topological properties of transverse curves related to special solutions of the YO equations.

A natural direction of investigation is the study of the integrable hierarchy of vector fields for transverse curves associated with the YO hierarchy.  These are generated by beginning with a conserved density for the YO equations, e.g.,
\[\rho_1 = \tfrac12 m, \quad \rho_2 = \tfrac12 |z|^2, \quad \rho_3 = \tfrac12 \Im(\zbar z_x) - \tfrac18 m^2, \quad
\rho_4 = -\tfrac12 \left( m |z|^2 + |z_x|^2\right), \ldots \]
and forming the vector field $X_n = f_n \Gamma + g_n B + h_n V$ where $(\Gamma, B, V)$ is a natural frame.  The coefficients are determined by the corresponding density
as follows.
As in \eqref{YOhierarchy} write $z = k+\ri \ell$ and express the density $\rho_n$ in terms of real invariants $k,\ell,m$ and their $x$-derivatives.  Let
\[ (a_n, b_n, c_n)^T = {\mathsf E} \rho_n \]
where $\mathsf E$ denotes the vector-valued Euler operator.  Then the components of $X_n$ are $h_n = 2c_n$, $g_n = \ri (a_n + \ri b_n)$ and
$f_n = -(c_n)_x + \ri d_n$ where $d_n = \int \Re(g_n \zbar) \dx$.   (Thus, these vector fields satisfy the conditions in \eqref{nonstretch}
to preserve the adapted frame.)

The first few vector fields generated this way are
\[
\begin{split}
X_1 & =  V = \Gamma_x, \\
X_2 & =  \ri z B = \ri (\Gamma_{xx}-m \Gamma),\\
X_3 & =  \left(\tfrac14 m_x + \tfrac12 \ri |z|^2\right) \Gamma + z_x B - \tfrac12 m V\\
X_4 & =  \left(\tfrac12 |z|^2_x - \ri \Im(\overline{z} z_x)\right)\Gamma +\ri (z_{xx}-m z)B -|z|^2 V,\\
\end{split}
\]

The fact that the antiderivative $d_n$
 is always expressible in terms of $z,m$ and their derivatives is somewhat mysterious.
However, we observe that
these antiderivatives are expressible in terms of Hermitian inner products of the vector fields themselves:
\[
d_{2j} = -\tfrac12 \sum_{k=1}^{2j-2} \langle X_{2j-k}, X_{1+k}\rangle, \qquad
		d_{2j+1} = -\tfrac12 \sum_{k=1}^{2j-1} (-1)^k \langle X_{2j+1-k}, X_{1+k}\rangle.
\]
Since $d_n = \Re \langle X_n, V \rangle = \tfrac12 (\langle X_n, V\rangle +\langle V, X_n\rangle)$ and $V=X_1$, these identities are equivalent to
\[
\sum_{k=0}^{2j-1} \langle X_{2j-k}, X_{1+k}\rangle = 0 \qquad
\text{and} \qquad
\sum_{k=0}^{2j} (-1)^k \langle X_{2j+1-k}, X_{1+k}\rangle = 0.
\]
These show a remarkable parallel with the situation for vector fields in the hierarchy for the vortex filament flow~\cite{L1999}, where the antiderivative required for the tangential component of $X_{n+1}$ is expressible in terms of inner products of the vector fields up to $X_n$.  In that case, the analogous identities were proved using the first-order `geometric' recursion operator for the vector fields.  In our case, it may be sufficient to have a second-order recursion operator that relates $X_{n+2}$ to $X_n$.

\bigskip

B\"acklund and Darboux transformations as well as Miura transformations are other common features of integrable systems. In particular, the classical B\"acklund transformation for the sine-Gordon equation has its origins in relating pair of pseudospherical surfaces through line congruences (see, e.g., \S7.5 in \cite{IL2016}). It is possible that an analogous transformation exists between T-curves evolving by the YO flow \eqref{Bflow}; one might expect that
 the curves would be joined by a congruence of circles in $S^3$ expressed in terms of the vectors of the natural frame.

In relation to a potential Miura transformation,  one can investigate the evolution equations induced by \eqref{Bflow} for the {\em tangent indicatrix}, i.e., the curve
in $S^3$ traced out by the projectivization of the frame vector $V$.
It is natural to ask how the invariants of these indicatrices are related to those of the primary curve, and furthermore whether,
when the primary curve evolves by an integrable geometric flow, the invariants of the indicatrix evolve by a related integrable system.


\begin{thebibliography}{99}
\def\book{\it}
\def\art{\sf}
\def\jou{\sl}
\newcommand{\jvol}[1]{{\bf{#1}}}
\newcommand{\ditto}{{\leavevmode\vrule height 2pt depth -1.6pt width 23pt\,}}

\bibitem{B1975} R. L.~Bishop, {\art There is more than one way to frame a curve}, American Math. Monthly, 822:46--51, 1975.

\bibitem{CI2021} A.~Calini, T.~Ivey, {\art Integrable geometric flows for curves in pseudoconformal $S^3$},
Journal of Geometry and Physics, 166, 104249, 2021.

\bibitem{C-HDLS2021} M.~Caso-Huerta, A.~Degasperis, S.~Lombardo, M.~Sommacal, {\art A new integrable model of long wave–short wave interaction and linear stability spectra}, Proc. R. Soc. A. 477 (2021) 20210408.

\bibitem{C-HDLlS2022} M.~Caso-Huerta, A.~Degasperis, S.~Lombardo, M.~Sommacal, {\art Periodic and solitary wave solutions of the long wave– short wave Yajima-Oikawa-Newell model}, Fluids 7 (7) (2022) 227. 

\bibitem{DR1977} V.~Djordjevic, L.~Redekopp, {\art On two-dimensional packets of capillary-gravity waves},
Journal of Fluid Mechanics, 79(4), 703-714, 1977.

\bibitem{G1975} R.H.J. Grimshaw, {\art The modulation and stability of an internal gravity wave}, Res. Rep. School Math. Sci., Univ. Melbourne, no. 32-1975.

\bibitem{LG2022} R.~Li, X.~Geng, {\art Periodic-background solutions for the Yajima–Oikawa long-wave–short-wave equation}, Nonlinear Dyn. 109 (2) (2022) 1053– 1067.

\bibitem{IL2016}  T.A. Ivey, J.M. Landsberg, {\book Cartan for Beginners: Differential Geometry via Moving Frames and Exterior Differential Systems} (2nd ed.), Graduate Studies in Mathematics {\bf 175}, American Mathematical Society, 2016

\bibitem{L1999} J.~Langer, {\art Recursion in Curve Geometry}, New York J. Math. 5, 25–51 (1991).

\bibitem{LP1991} J.~Langer, R.~Perline, {\art Poisson geometry of the filament equation}, J Nonlinear Sci 1, 71–93 (1991).


\bibitem{W2006} O. C.~Wright, {\art Homoclinic Connections of Unstable Plane Waves of the Long-Wave--Short-Wave Equations}, Studies in Applied Mathematics 117:71--93, 2006.

\bibitem{YO1976} N.~Yajima, M.~Oikawa, {\art Formation and Interaction of Sonic-Langmuir Solitons}, Progress of Theoretical Physics, 56(6), 1719--1739, 1976.



\end{thebibliography}
\end{document}